\newtheorem{theorem}{Theorem}[section]
\newtheorem{lemma}[theorem]{Lemma}
\newtheorem{proposition}[theorem]{Proposition}
\newtheorem{corollary}[theorem]{Corollary}
\theoremstyle{definition}
\newtheorem{definition}[theorem]{Definition}
\theoremstyle{remark}
\numberwithin{equation}{section}
\author{Casey Donoven\footnote{Email address: \href{mailto:cdonoven@binghamton.com}{\nolinkurl{cdonoven@binghamton.com}}}}
\title{Groups that are the union of two semigroups have left-orderable quotients}
\begin{document}
\maketitle
\begin{abstract}
In this article, we show that a group $G$ is the union of two proper subsemigroups if and only if $G$ has a nontrivial left-orderable quotient.  Furthermore, if $G$ is the union of two proper semigroups, then there exists a minimum normal subgroup $N\unlhd G$ for which $G/N$ is left-orderable and nontrivial.
\end{abstract}
\section{Introduction}
The \emph{covering number} of group $G$ with respect to subgroups, $\sigma_g(G)$, is the minimum number of proper subgroups of $G$ whose union is $G$.  The covering number of groups has been extensively studied and was formally defined by \cite{Cohn94}.  Early results on covering numbers (not phrased as such) include \cite{Scorza26}, in which Scorza showed that a group has covering number three if and only if $G$ has a homomorphic image isomorphic to the Klein-Four group.  While is it is elementary to show no group is the union of two proper subgroups, it is also the case that no group has covering number seven \cite{Tomkinson97}.  It is now known for all $n$ satisfying $2\leq n\leq 129$ whether $n$ is a covering number of a group \cite{GaronziKappeSwartz18}.  Similar studies have explored analogous results for rings and loops, see \cite{GagolaKappe16}, \cite{Lucchini12}, and \cite{Werner15}.  

This paper explores covering groups with subsemigroups, as opposed to subgroups.  A \emph{semigroup} is a set with an associative operation and a \emph{subsemigroup} is simply a subset of a semigroup that is closed with respect to the inherited opertaion. Note that all groups are semigroups, but semigroups need not have an identity or inverses.  The covering number of a semigroup $S$ with respect to subsemigroups, $\sigma_s(S)$, is defined analogously to covering numbers of groups.  Covering numbers of semigroups are explored in \cite{DonovenKappe} and are characterized for finite semigroups and some specific classes of semigroups.

While a group is never the union of two proper subgroups, a group may be the union of two proper subsemigroups.  For example, the additive group of integers, $\mathbb{Z}$, is the union of two proper subsemigroups, namely the positive and non-positive integers. Our main result characterizes precisely when a group is the union of two semigroups.  Before stating our main result, we first give the following definition of left-orderable groups and a proposition alluding to the relationship between left-orderable groups and semigroups.

\begin{definition}
A group $G$ is \emph{left-orderable} when there is a total order $\leq$ on $G$ that respects left multiplication, i.e.~for $g_1,g_2,h\in G$, we have $g_1\leq g_2$ if and only if $hg_1\leq hg_2$.
\end{definition}

Throughout this paper, we will use the following proposition as an equivalent definition of left-orderable groups.  For a subset $A$ of a group $G$, define $A^{-1}=\{a^{-1}\mid a\in A\}$.

\begin{proposition}\label{pro:loequivalentdefn}
If $G$ is a group with left-order $\leq$, then $P=\{g\in G\mid 1\leq g\}$ satisfies $P\cup P^{-1}=G$ and $P\cap P^{-1}=\{1\}$.  Conversely, if $G$ is a group with subsemigroup $P$ satisfying $P\cup P^{-1}=G$ and $P\cap P^{-1}=\{1\}$, then $G$ has a left-order $\leq$ defined by $g\leq h$ if and only if $g^{-1}h\in P$.
\end{proposition}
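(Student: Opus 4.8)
The plan is to treat the two directions separately, each by elementary manipulation of the order relation against the group operation; no clever construction is needed.

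For the forward direction, I would start from a left-order $\leq$ on $G$ and the set $P=\{g\in G\mid 1\leq g\}$. First I would note $P$ is a subsemigroup: if $1\leq g$ and $1\leq h$, then left-multiplying $1\leq h$ by $g$ gives $g\leq gh$, and combining with $1\leq g$ by transitivity yields $1\leq gh$. To get $P\cup P^{-1}=G$, take any $g\in G$; by totality either $1\leq g$, so $g\in P$, or $g\leq 1$, and in the latter case left-multiplying by $g^{-1}$ gives $1\leq g^{-1}$, i.e.\ $g\in P^{-1}$. For $P\cap P^{-1}=\{1\}$: clearly $1\leq 1$ so $1\in P\cap P^{-1}$, and conversely if $g\in P\cap P^{-1}$ then $1\leq g$ and $1\leq g^{-1}$, and left-multiplying the second by $g$ gives $g\leq 1$, whence $g=1$ by antisymmetry.

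For the converse, given a subsemigroup $P$ with $P\cup P^{-1}=G$ and $P\cap P^{-1}=\{1\}$, I would define $g\leq h$ iff $g^{-1}h\in P$ and verify the order axioms in turn. Reflexivity needs $1\in P$, which follows because $1\in G=P\cup P^{-1}$ forces $1\in P$ in either case. Antisymmetry: if $g\leq h$ and $h\leq g$ then $g^{-1}h$ and its inverse $h^{-1}g$ both lie in $P$, hence $g^{-1}h\in P\cap P^{-1}=\{1\}$, so $g=h$. Transitivity uses closure of $P$ under multiplication, since $(g^{-1}h)(h^{-1}k)=g^{-1}k$. Totality follows from $g^{-1}h\in P\cup P^{-1}$. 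Finally, left-invariance is immediate from $(hg_1)^{-1}(hg_2)=g_1^{-1}g_2$, so $hg_1\leq hg_2$ iff $g_1\leq g_2$.

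I do not expect a genuine obstacle: this is a standard reformulation and each step is a one-line computation in $G$. The only points requiring a moment's care are deducing $1\in P$ in the converse (the hypothesis does not a priori say $P$ contains an identity) and being thorough enough to check all four order axioms together with left-invariance, rather than only re-deriving the "positive cone" conditions.
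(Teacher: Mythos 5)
Your proof is correct and complete: both directions are verified by the standard cone/order translation, including the points that genuinely need checking (that $1\in P$ in the converse, and that $P$ built from the order is closed under multiplication). The paper states this proposition without proof, treating it as the standard equivalent characterization of left-orderability, and your argument is exactly the routine verification it implicitly relies on, so there is nothing to compare beyond noting your write-up supplies the omitted details correctly.
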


Examples of left-orderable groups include torsion-free abelian or nilpotent groups, free groups, and Thompson's group $F$.
See \cite{DeroinNavasRivas14} for more details and examples of left-orderable groups.

Our main result extends the relationship between left-orderability and groups as the union of two subsemigroups.

\begin{theorem}\label{thm:MainThm}
A group $G$ is the union of two proper subsemigroups if and only if $G$ has a nontrivial left-orderable quotient.
\end{theorem}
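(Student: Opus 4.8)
The plan is to treat the two implications separately, with the forward implication doing the real work; the reverse one is a short pull-back argument.

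\emph{Reverse implication.} Suppose $N\trianglelefteq G$ with $G/N$ nontrivial and left-orderable. By Proposition~\ref{pro:loequivalentdefn} there is a subsemigroup $\bar P$ of $G/N$ with $\bar P\cup\bar P^{-1}=G/N$ and $\bar P\cap\bar P^{-1}=\{1\}$. Since $G/N\neq\{1\}$, $\bar P$ is nontrivial, and because $\bar P$ and $\bar P^{-1}$ meet only in $\{1\}$ neither can equal $G/N$. Writing $\pi\colon G\to G/N$ for the quotient map, the preimages $\pi^{-1}(\bar P)$ and $\pi^{-1}(\bar P^{-1})$ are subsemigroups of $G$ whose union is $G$, and each is proper since $\pi$ is surjective and neither $\bar P$ nor $\bar P^{-1}$ is all of $G/N$.

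\emph{Forward implication, step one.} Assume $G=S\cup T$ with $S,T$ proper subsemigroups. I would first show that $G$ has a \emph{proper} subsemigroup $P$ with $1\in P$ and $P\cup P^{-1}=G$, arguing by contradiction. The sets $S\cup\{1\}$ and $T\cup\{1\}$ are subsemigroups, and each is proper because $G\setminus\{1\}$ is never a subsemigroup of a nontrivial group. Applying the assumption to $S\cup\{1\}$ produces $u\notin S\cup S^{-1}\cup\{1\}$, so $u,u^{-1}\in T$ and hence $1\in T$; applying it to $T$ produces $v$ with $v,v^{-1}\in S$, so $1\in S$. Now $uv\in S\cup T$, but $uv\in T$ would give $v=u^{-1}(uv)\in T$ while $uv\in S$ would give $u=(uv)v^{-1}\in S$, both impossible. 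This contradiction yields the desired $P$.

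\emph{Forward implication, step two.} Fix such a $P$ and set $N=P\cap P^{-1}$; one checks that $N$ is a subgroup of $G$ and $N\subseteq P\neq G$. The point is that, although $N$ need not be normal, the recipe $gN\preceq g'N\iff g^{-1}g'\in P$ is a well-defined $G$-invariant total order on the left coset space $G/N$, where reflexivity, totality, antisymmetry and transitivity come respectively from $1\in P$, $P\cup P^{-1}=G$, $P\cap P^{-1}=N$, and the fact that $P$ is a subsemigroup. Thus $G$ acts by order-preserving bijections on the nonempty totally ordered set $(G/N,\preceq)$, and the kernel of this action is the normal core $M=\bigcap_{g\in G}gNg^{-1}\subseteq N$, so $M\neq G$. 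Then $G/M$ acts \emph{faithfully} by order-preserving bijections on a totally ordered set, and such a group is left-orderable: this is the standard characterization of left-orderability (see \cite{DeroinNavasRivas14}), and it can be proved directly by well-ordering $G/N$ and declaring two elements of $G/M$ comparable according to how they act at the first coset on which they differ. Since $M\neq G$, $G/M$ is a nontrivial left-orderable quotient. For the remark on a minimum $N$, one intersects all $K\trianglelefteq G$ with $G/K$ nontrivial and left-orderable: the family is nonempty by the above, $G/\bigcap K$ embeds diagonally in $\prod_K G/K$, which is left-orderable (arbitrary products and subgroups of left-orderable groups are left-orderable), and $\bigcap K\subseteq M\neq G$, so this intersection is the minimum such subgroup.

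The main obstacle is step two, precisely the failure of $N=P\cap P^{-1}$ to be normal in general, which blocks pushing $P$ directly to a positive cone in a group quotient. The resolution is to read off the order on the mere \emph{set} $G/N$ (where normality is irrelevant) and then recover left-orderability of $G/M$ from its faithful order-preserving action rather than from an explicit cone.
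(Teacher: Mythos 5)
Your proof is correct, and it takes a genuinely different route from the paper's. The reverse implication is the same pull-back argument as the paper's. For the forward direction, the paper works through Lemmas~\ref{lem:intersectioninB}--\ref{lem:A-Isemigroupsecond} to reduce to a covering $G=A\cup B$ with $A\cap B=\{1\}$ and $(A-\{1\})^{-1}=B-H$, and then, when the maximal subgroup $H$ of $B$ is not normal, uses a conjugation trick together with Zorn's lemma to pass to a minimal pair $(U,V)$ whose maximal subgroup is forced to be normal (Lemma~\ref{lem:MinimalPairOfSemigroups}), so that the quotient carries the explicit positive cone $V/N$. You instead produce, by a short two-counterexample argument, a proper subsemigroup $P\ni 1$ with $P\cup P^{-1}=G$ (essentially what the paper's reduction also yields, since there $B\cup B^{-1}=G$), and then sidestep the normality issue entirely: $P$ induces a left-invariant total order on the coset space $G/N$ with $N=P\cap P^{-1}$ (well-definedness uses $NPN\subseteq P$, which holds because $N\subseteq P$ and $P$ is closed under products -- worth stating explicitly), and the standard dynamical characterization of left-orderability (a group acting faithfully by order-preserving bijections on a totally ordered set is left-orderable) applied to $G$ modulo the core $M$ of $N$, which is proper, gives the nontrivial left-orderable quotient $G/M$. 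The trade-off: the paper's argument is self-contained, exhibits an explicit positive cone in the quotient, and builds the pair machinery reused in Section~3, whereas yours is shorter but outsources the choice-theoretic work to the dynamical characterization and produces a less explicit order on $G/M$; both approaches rely on the axiom of choice (Zorn versus a well-ordering of $G/N$). Your closing observation on the minimum normal subgroup, via the diagonal embedding of $G/\bigcap K$ into $\prod_K G/K$, is also sound and in fact completes the minimality claim that Proposition~\ref{pro:IntersectingOrders} only establishes for pairwise intersections.
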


As a brief example, consider $G=\mathbb{Z}\times C_2$, where $C_2$ is the cyclic group of order two.  Since $G$ has elements of finite order, $G$ is not left-orderable.  However, $G$ is the union of two proper subsemigroups, $P\times C_2$ and $P^{-1}\times C_2$, where $P$ is the set of non-negative integers.  Also, it is clear that $G$ quotients onto $\mathbb{Z}$ and thus has a left-orderable quotient.

After we prove Theorem~\ref{thm:MainThm}, we give some simple remarks on minimality of normal subgroups inducing left-orderable quotients and finish with some open questions.

\section{Proof of Theorem~\ref{thm:MainThm}}
In this section, we give a proof of Theorem~\ref{thm:MainThm} after presenting several useful lemmas.  We begin with the proof of the reverse implication in Theorem~\ref{thm:MainThm}.  

\begin{proposition}
Let $G$ be a group and $H\unlhd G$ such that $G/H$ is left-orderable and not the trivial group.  Then $G$ is the union of two proper subsemigroups.
\end{proposition}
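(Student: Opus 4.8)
The plan is to pull back, via the quotient map, the positive cone decomposition that left-orderability of $G/H$ provides. Concretely, write $\pi\colon G\to G/H$ for the canonical projection. Since $G/H$ is left-orderable, Proposition~\ref{pro:loequivalentdefn} furnishes a subsemigroup $P\leq G/H$ with $P\cup P^{-1}=G/H$ and $P\cap P^{-1}=\{1\}$. I would then set $S_1=\pi^{-1}(P)$ and $S_2=\pi^{-1}(P^{-1})$ and claim these are the two proper subsemigroups covering $G$.

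The first step is to check that $S_1$ and $S_2$ are subsemigroups of $G$. This is routine: if $x,y\in\pi^{-1}(P)$ then $\pi(xy)=\pi(x)\pi(y)\in P$ because $P$ is closed under the operation of $G/H$, so $xy\in\pi^{-1}(P)$; the same argument works for $P^{-1}$, which is also a subsemigroup (indeed $P^{-1}=\{g^{-1}\mid g\in P\}$ is closed since $(g_1^{-1}g_2^{-1})^{-1}=g_2g_1\in P$). The second step is the covering: $S_1\cup S_2=\pi^{-1}(P)\cup\pi^{-1}(P^{-1})=\pi^{-1}(P\cup P^{-1})=\pi^{-1}(G/H)=G$.

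The third and only genuinely substantive step is properness, and here is where the hypotheses that $G/H$ is nontrivial and that $P\cap P^{-1}=\{1\}$ get used. If $S_1=G$, then $\pi(G)=G/H$ forces $P=G/H$; but then $P^{-1}=\{g^{-1}\mid g\in G/H\}=G/H$ as well, so $P\cap P^{-1}=G/H$, which by $P\cap P^{-1}=\{1\}$ makes $G/H$ trivial, contradicting the hypothesis. Hence $S_1\neq G$, and by the symmetric argument $S_2\neq G$. Thus $G=S_1\cup S_2$ is a union of two proper subsemigroups, as desired.

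I do not expect a real obstacle here; the proof is essentially a transport-of-structure argument, and the main thing to be careful about is not conflating ``$S_i$ proper'' with ``$P$ proper'' without invoking surjectivity of $\pi$ and the nontriviality of $G/H$. If one wanted, one could instead argue more hands-on that $\pi^{-1}(P)$ omits at least one element by picking any $g\in G/H$ with $g\neq 1$, noting $g^{-1}\notin P$ or $g\notin P$ (since they cannot both lie in $P$), and lifting; but the clean way is the one above.
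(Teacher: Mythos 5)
Your proposal is correct and follows essentially the same route as the paper: pull back the positive cone $P$ and its inverse $P^{-1}$ through the quotient map and check closure, covering, and properness, with the nontriviality of $G/H$ guaranteeing properness. The only difference is that you spell out the properness argument in slightly more detail than the paper does; no gap.
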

\begin{proof}
Since $G/H$ is a nontrivial left-orderable group, $G/H$ has a proper subsemigroup $P=\{gH\in G/H \mid H\leq gH\}$ where $\leq$ is the order on $G/H$.  Moreover, $P^{-1}$ is also a proper subsemigroup of $G/H$ such that $P\cup P^{-1}=G/H$.  Letting $\phi:G\to G/H$ be the quotient map, we see $\phi^{-1}(P)$ and $\phi^{-1}(P^{-1})$ are proper subsemigroups of $G$ such that $\phi^{-1}(P)\cup\phi^{-1}(P^{-1})=G$.
\end{proof}

For the remainder of this section, let $G$ be a group such that $G$ is the union of two proper subsemigroups, $A$ and $B$.  Note that if $S$ is a proper subsemigroup of $G$, then $S\cup \{1\}$ is also a proper subsemigroup, so we implicitly assume $1\in A\cap B$.  

Define $I=A\cap B$.  We use $\langle I \rangle$ to mean the group generated by $I$.  The following four lemmas will be used to show that we may assume $I=\{1\}$ without loss of generality.

\begin{lemma}\label{lem:intersectioninB}
$\langle I \rangle$ is contained in $A$ or $B$.
\end{lemma}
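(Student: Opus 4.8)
The plan is to show that $\langle I\rangle$, the group generated by the intersection $I=A\cap B$, is wholly contained in one of the two covering subsemigroups. The natural approach is to first understand what $\langle I\rangle$ looks like as a set. Since $I$ is a subsemigroup of $G$ containing $1$, every element of $\langle I\rangle$ is a product of elements of $I$ and their inverses; in particular, for any $g\in\langle I\rangle$ we can write $g=x^{-1}y$ for suitable $x,y$ — more usefully, I would aim to show $\langle I\rangle = I^{-1}I = \{x^{-1}y : x,y\in I\}$, or at least that $\langle I\rangle\subseteq I^{-1}I$ once we know $I$ generates. Actually the cleaner route: for $h\in\langle I\rangle$, both $h$ and $h^{-1}$ lie in $\langle I\rangle$, and we want to locate $h$ inside $A\cup B$.

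First I would argue that $I^{-1}$ is contained in $A\cup B$ (trivially, since $A\cup B=G$), but more: I want to pin down where inverses of $I$-elements go. The key observation I would try to exploit is an asymmetry between $A$ and $B$ coming from $I=A\cap B$. Suppose, toward a dichotomy, that $I^{-1}\subseteq A$; then I claim $\langle I\rangle\subseteq A$. Indeed, if $x,y\in I\subseteq A$ and also $x^{-1}\in A$, then $x^{-1}y$ is a product of elements of $A$, hence in $A$ since $A$ is a semigroup; iterating, any word in $I\cup I^{-1}$ lies in $A$, giving $\langle I\rangle\subseteq A$. The symmetric statement gives: if $I^{-1}\subseteq B$ then $\langle I\rangle\subseteq B$. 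So it suffices to prove that $I^{-1}\subseteq A$ or $I^{-1}\subseteq B$.

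The hard part will be establishing that dichotomy: that the inverses of $I$ cannot be split between $A\setminus B$ and $B\setminus A$. Here is the argument I would pursue. Suppose not: pick $a\in I$ with $a^{-1}\in B\setminus A$ and $b\in I$ with $b^{-1}\in A\setminus B$. Now consider the element $a^{-1}b^{-1} = (ba)^{-1}$. Since $a,b\in I\subseteq A\cap B$, we have $ba\in A\cap B=I$, so $ba\in I$ and thus $(ba)^{-1}\in I^{-1}$. On the other hand, $a^{-1}b^{-1}$: is it in $A$? If $a^{-1}b^{-1}\in A$, then since $b\in A$ we'd get $a^{-1}b^{-1}\cdot b = a^{-1}\in A$, contradicting $a^{-1}\notin A$. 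So $a^{-1}b^{-1}\notin A$, hence $a^{-1}b^{-1}\in B$ (as $A\cup B=G$). But then since $a\in B$, we get $a\cdot a^{-1}b^{-1}=b^{-1}\in B$, contradicting $b^{-1}\notin B$. This contradiction shows the split is impossible, so every element of $I^{-1}$ lies in $A$, or every element lies in $B$ — wait, I should be careful that the contradiction as phrased only rules out the existence of such a pair $(a,b)$ simultaneously; that is exactly what is needed, since it means $I^{-1}\setminus A=\emptyset$ or $I^{-1}\setminus B=\emptyset$, i.e. $I^{-1}\subseteq A$ or $I^{-1}\subseteq B$.

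So the proof structure is: (1) reduce to showing $I^{-1}\subseteq A$ or $I^{-1}\subseteq B$; (2) show each of these implies $\langle I\rangle\subseteq A$ (resp. $B$) using closure of the semigroup and the fact that $I$ generates $\langle I\rangle$ as words in $I\cup I^{-1}$; (3) prove the dichotomy by the contradiction argument above, assuming elements $a^{-1}\in B\setminus A$, $b^{-1}\in A\setminus B$ and deriving that $a^{-1}b^{-1}$ can be in neither $A$ nor $B$. The main obstacle is step (3) — making the case analysis airtight and choosing the right product of inverses whose membership is forced both ways; everything else is routine semigroup closure bookkeeping. I should double-check the edge case where $I^{-1}\subseteq A\cap B$ (both hold), which is fine and consistent with the conclusion.
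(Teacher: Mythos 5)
Your proof is correct and takes essentially the same approach as the paper: the paper also reduces to showing that the inverses of elements of $I$ cannot be split between $A\setminus B$ and $B\setminus A$, and rules out such a pair $x^{-1}\in A\setminus B$, $y^{-1}\in B\setminus A$ by testing whether the product of the two inverses lies in $A$ or in $B$ and multiplying back by $x\in I$ or $y\in I$ to get a contradiction, exactly as you do with $a^{-1}b^{-1}$. Your step (2), passing from $I\cup I^{-1}\subseteq B$ (or $A$) to $\langle I\rangle\subseteq B$ via semigroup closure, is the same (implicit) final step as in the paper.
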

\begin{proof}
Consider the following two disjoint sets: $$I_A=\{x\in I \mid  x^{-1}\in A\text{ and }x^{-1}\not\in B\}$$ and $$I_B=\{y\in I \mid  y^{-1}\in B\text{ and }y^{-1}\not\in A\}.$$  Suppose that $x\in I_A$ and $y\in I_B$.  Then the element $x^{-1}y^{-1}$ must be in $A$ or $B$.  If $x^{-1}y^{-1}\in A$, then $xx^{-1}y^{-1}=y^{-1}\in A$ which contradicts $y\in I_B$. Likewise, if $x^{-1}y^{-1}\in B$, then $x^{-1}y^{-1}y=x^{-1}\in B$ which contradicts $x\in I_A$. Therefore $I_A$ or $I_B$ is empty.

Without loss of generality, assume $I_A=\emptyset$.  This implies that $I^{-1}\subseteq B$ and thus the group generated by $I$ is a subset of $B$.
\end{proof}

Henceforth, we will assume $\langle I \rangle\subseteq B$.  Note that the inverse of \emph{some} elements in $I$ may be contained in $A$, however the inverse of \emph{every} element in $I$ is contained in $B$. 

Define $H=\{h\in B \mid h^{-1}\in B\}$.  We see $H$ is a subgroup of $B$ and moreover $H$ is the maximal subgroup of $B$ with respect to inclusion.  Note that $\langle I\rangle \leq H$.

\begin{lemma}\label{lem:AtimesHsecond}
If $h\in H$, then $h(A-I)=(A-I)=(A-I)h$ and  $h(B-H)=(B-H)=(B-H)h$.
\end{lemma}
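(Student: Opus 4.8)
The plan is to establish each of the four equalities by a two-sided inclusion argument. The two ingredients are: $H$ is a subgroup of $B$ (proved just above), so $h\in H$ forces $h^{-1}\in H$; and $A$, $B$ are semigroups, hence closed under multiplication. I would also record at the outset that $A-I=A\setminus B$ (because $I=A\cap B$) and, since $I\subseteq H$, that $B-H\subseteq B\setminus A$; these let me translate the various membership statements into statements about which of $A$, $B$ an element avoids.

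First I would handle $h(A-I)\subseteq A-I$: if $a\in A-I$ and $ha\in B$, then $h^{-1}\in B$ together with closure of $B$ gives $a=h^{-1}(ha)\in B$, contradicting $a\notin B$; hence $ha\notin B$, so $ha\in A$ and $ha\notin I$, i.e.\ $ha\in A-I$. Applying the same reasoning with $h^{-1}\in H$ in place of $h$ yields $h^{-1}(A-I)\subseteq A-I$, and composing the two inclusions gives $h(A-I)=A-I$. The right-hand identity $(A-I)h=A-I$ follows by the identical cancellation performed on the other side.

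For the second pair, given $b\in B-H$, closure of $B$ already puts $hb\in B$, so the only thing to check is $hb\notin H$, equivalently $(hb)^{-1}=b^{-1}h^{-1}\notin B$; but if $b^{-1}h^{-1}\in B$, then right-multiplying by $h\in B$ forces $b^{-1}\in B$, i.e.\ $b\in H$, a contradiction. Thus $hb\in B-H$, and once more the reverse inclusion (via $h^{-1}$) and the right-handed version $(B-H)h=B-H$ are obtained by symmetry.

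I do not anticipate a genuine obstacle here: the whole argument is a short sequence of cancellations. The only points requiring care are the bookkeeping — that $G=A\cup B$ is what is used to locate $ha$, whereas $hb$ is located purely by closure of $B$ — and the observation that the group structure of $H$ is precisely what upgrades each one-sided inclusion to an equality.
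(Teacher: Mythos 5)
Your proof is correct and is essentially the paper's argument: both rest on $H$ being a subgroup contained in $B$ together with closure of $A$ and $B$, the paper merely packaging your cancellations by observing that $B$ (hence its complement $A-I$, and also $B-H$) is a union of cosets of $H$, which are invariant under translation by $h\in H$. Your element-wise version and the coset phrasing are the same mechanism, so nothing is missing.
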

\begin{proof}
Since $B$ is a semigroup,  $bH\subseteq B$ for all $b\in B$. Thus, $B$ is a union of left cosets of $H$.  The complement of $B$, i.e.~$A-I$, is also a union of left cosets of $H$. This implies $(A-I)h=(A-I)$ and $(B-H)h=(B-H)$. A similar argument with right cosets finishes the proof.
\end{proof}

The following lemma describes the inverses of elements in $A$ and $B$.

\begin{lemma}\label{lem:AandBareinversessecond}
$(A-I)^{-1}=B-H$
\end{lemma}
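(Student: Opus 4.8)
The plan is to prove the inclusion $B-H\subseteq(A-I)^{-1}$ directly and then obtain $(A-I)^{-1}\subseteq B-H$ by elimination. I will use repeatedly that $A-I=G\setminus B$ — this holds since $G=A\cup B$ and $I=A\cap B$ — so that $G$ is the disjoint union $(A-I)\sqcup(B-H)\sqcup H$. For the first inclusion: if $b\in B-H$ then $b^{-1}\notin B$ by the definition of $H$, so $b^{-1}\in A$; moreover $b^{-1}\notin B$ forces $b^{-1}\notin I$, so $b^{-1}\in A-I$ and $b=(b^{-1})^{-1}\in(A-I)^{-1}$. For the reverse inclusion, note first that $(A-I)^{-1}\cap H=\emptyset$: if $a\in A-I$ had $a^{-1}\in H\subseteq B$, then $a=(a^{-1})^{-1}\in H\subseteq B$, contradicting $a\notin B$. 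Since $G=(A-I)\sqcup(B-H)\sqcup H$ and $B-H\subseteq(A-I)^{-1}$, it therefore suffices to show that $(A-I)^{-1}\cap(A-I)=\emptyset$; this yields $(A-I)^{-1}\subseteq B-H$ and completes the proof.

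So suppose toward a contradiction that some $x$ satisfies $x\in A-I$ and $x^{-1}\in A-I$, i.e.\ $x,x^{-1}\notin B$. I would first record that $B$ is not a subgroup of $G$: if it were, then for any $g\in G\setminus B$ (which exists as $B$ is proper) and any $b\in B$ we would have $g^{-1}\notin B$ and $bg\notin B$, so $b=(bg)g^{-1}$ lies in the subsemigroup generated by $G\setminus B$; hence that subsemigroup equals $G$, but it is contained in $A$, contradicting that $A$ is proper. Thus $H\subsetneq B$, so $B-H\neq\emptyset$. Next I would apply Lemma~\ref{lem:AtimesHsecond}: for every $h\in H$ it gives $hx^{-1}\in h(A-I)=A-I\subseteq A$, and since $x\in A$ and $A$ is a semigroup we get $h=(hx^{-1})x\in A$, so $h\in A\cap B=I$. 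Hence $H=I$.

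With $H=I$ in hand, a short case analysis shows that for any $b\in B-H$ one has $xb^{-1},x^{-1}b^{-1}\in A-I$ and $bx,\,bx^{-1},\,xb,\,x^{-1}b\in B-H$ — each step uses only that a product of two elements of $A$ lies in $A$, a product of two elements of $B$ lies in $B$, and that $A\cap B=H=I$ is a subgroup (together with Lemma~\ref{lem:AtimesHsecond}). Iterating these relations, left and right multiplication by $x$ and by $x^{-1}$ each carry $B-H$ onto itself and hence also carry $A$ onto itself. The remaining task — which I expect to be the main obstacle — is to turn this invariance, together with $x\notin B$ and $B-H\neq\emptyset$, into an outright contradiction: concretely, one must rule out that the whole cyclic subgroup $\langle x\rangle$ can be ``absorbed'' into $A$ in the manner these identities force without forcing $A=G$ or $B=H$. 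Once that last contradiction is established, no such $x$ exists, so $(A-I)^{-1}\cap(A-I)=\emptyset$ and therefore $(A-I)^{-1}=B-H$.
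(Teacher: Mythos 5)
Your first inclusion and your treatment of the case $H\neq I$ are correct and essentially the paper's argument: using $h(A-I)=A-I$ from Lemma~\ref{lem:AtimesHsecond} together with $x^{-1}\in A$ forces $h\in A\cap B=I$, so either you get a contradiction or you are pushed into the case $H=I$. The genuine gap is exactly the step you flag as ``the main obstacle,'' and it is not merely unfinished --- it cannot be finished, because in the case $H=I$ the statement you are trying to prove ($(A-I)^{-1}\cap(A-I)=\emptyset$, equivalently the literal equality $(A-I)^{-1}=B-H$) can fail under all the standing hypotheses. Take $G=\mathbb{Z}^2$, $B=\{(a,b)\mid a<0\}\cup\{(0,0)\}$ and $A=\{(a,b)\mid a>0\}\cup\{(0,b)\mid b\in\mathbb{Z}\}$. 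These are proper subsemigroups with $G=A\cup B$, $I=A\cap B=\{(0,0)\}$, $\langle I\rangle\subseteq B$ and $H=I$, yet $x=(0,1)$ satisfies $x,x^{-1}\in A-I$, so $(A-I)^{-1}\neq B-H$. This example also satisfies all the invariance relations you list ($bx,xb\in B-H$, etc.), which is why no amount of iterating them will produce the contradiction you want.

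The paper's proof resolves the case $H=I$ differently: it does not seek a contradiction but \emph{switches the names} of $A$ and $B$. This is legitimate because when $H=I$ we have $\langle I\rangle=I$ contained in both $A$ and $B$, so the normalization $\langle I\rangle\subseteq B$ coming from Lemma~\ref{lem:intersectioninB} is symmetric; one then proves $(B-I)^{-1}=A-H'$ with $H'=\{h\in A\mid h^{-1}\in A\}$ the maximal subgroup of $A$. In other words, the lemma is really a ``without loss of generality'' statement, and the counterexample above shows the relabeling is sometimes necessary. What makes the relabeling safe is that the two bad configurations cannot coexist: if $x,x^{-1}\in A-I$ and $y,y^{-1}\in B-I$, then $xy\in A$ gives $y=x^{-1}(xy)\in A$ and $xy\in B$ gives $x=(xy)y^{-1}\in B$, a contradiction either way (the same cancellation trick as in Lemma~\ref{lem:intersectioninB}). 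To repair your write-up, keep your $H\neq I$ argument, and in the case $H=I$ replace the search for a contradiction by this relabeling and rerun the same cancellation argument with the roles of $A$ and $B$ exchanged.
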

\begin{proof}
Let $b\in B-H$.  Then $b^{-1}\not\in B$ and thus $b^{-1}\in A-I$.  Also let $a\in (A-I)$.  Suppose for contradiction that $a^{-1}\in A-I$. 

If there exists an $h\in H$ such that $h\not\in I$,  then $ha\in A$ by the previous lemma.  Therefore $haa^{-1}=h\in A$, which is a contradiction.  Therefore $a^{-1}\in B-H$.

However, if there does not exist an $h\in H$ such that $h\not\in I$, then $\langle I\rangle =H=I$.  Let $H'=\{h\in A \mid h^{-1}\in A\}$.  We see that for all $a'\in A-H'$, $a'\in B-H'=B-I$ and therefore $(B-I)^{-1}=A-H'$.  In this case, without loss of generality, switch the names of $A$ and $B$ as well as $H$ and $H'$ to complete the proof.
\end{proof}

Note that Lemma~\ref{lem:AandBareinversessecond} implies every subgroup of $A$ is contained in $I$.

\begin{lemma}\label{lem:A-Isemigroupsecond}
$A-I$ is a semigroup.
\end{lemma}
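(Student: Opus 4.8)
The plan is to show that $A-I$ is closed under multiplication. Let $a_1, a_2 \in A-I$; we must show $a_1 a_2 \in A-I$. Since $G = A \cup B$, certainly $a_1 a_2 \in A$ or $a_1 a_2 \in B$, so it suffices to rule out $a_1 a_2 \in I$ and $a_1 a_2 \in B - H$.

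First I would dispose of the case $a_1 a_2 \in B - H$. By Lemma~\ref{lem:AandBareinversessecond}, $(A-I)^{-1} = B-H$, so $a_1 a_2 \in B-H$ would give $(a_1 a_2)^{-1} = a_2^{-1} a_1^{-1} \in A - I$. But also $a_2^{-1} \in B-H$ and $a_1^{-1} \in B - H$ by the same lemma, and since $B-H \subseteq B$ and $B$ is a semigroup, $a_2^{-1} a_1^{-1} \in B$. Now I need $a_2^{-1} a_1^{-1}$ to actually land in $B - H$ (not in $H$): if it were in $H$, then since $(a_2^{-1} a_1^{-1})^{-1} = a_1 a_2 \in H$ as well, contradicting $a_1 a_2 \in B-H$. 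So $a_2^{-1} a_1^{-1} \in B-H$, contradicting $a_2^{-1} a_1^{-1} \in A-I$ since $(A-I) \cap (B-H) = \emptyset$ (these are disjoint as $A-I$ and $B-H$ partition... actually $A - I$ is the complement of $B$ and $B - H \subseteq B$). This rules out $a_1 a_2 \in B-H$.

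Next I would rule out $a_1 a_2 \in I$. If $a_1 a_2 \in I \subseteq B$, then $a_2 = a_1^{-1}(a_1 a_2)$. Here $a_1^{-1} \in B-H$ by Lemma~\ref{lem:AandBareinversessecond}, and $a_1 a_2 \in I \subseteq H \subseteq B$, so the product $a_1^{-1}(a_1 a_2)$ lies in $B$ since $B$ is a semigroup — giving $a_2 \in B$. But by Lemma~\ref{lem:AtimesHsecond}, right multiplication by an element of $H$ preserves $B - H$, so $a_1^{-1}(a_1 a_2) \in B-H$; that is, $a_2 \in B-H$, contradicting $a_2 \in A-I$ since $A-I$ and $B-H$ are disjoint. (The degenerate possibility flagged in the proof of Lemma~\ref{lem:AandBareinversessecond}, where $H = I$, has already been normalized away by the "switch names" convention, so I may assume the conclusion $(A-I)^{-1} = B-H$ holds as stated.)

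The main obstacle is bookkeeping about which of the disjoint pieces $I$, $H - I$, $A - I$, $B - H$ each intermediate product lands in, and in particular making sure that the "$h \in H$" used to invoke Lemma~\ref{lem:AtimesHsecond} is available — but since $a_1 a_2 \in I$ is itself such an element of $H$, multiplication by it is covered directly by Lemma~\ref{lem:AtimesHsecond}, so no separate case analysis on whether $H = I$ is needed beyond what the earlier lemma already handled. Having eliminated both $a_1 a_2 \in B - H$ and $a_1 a_2 \in I$, we conclude $a_1 a_2 \in A - I$, so $A-I$ is a semigroup.
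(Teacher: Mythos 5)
Your case enumeration has a hole. At this stage $G$ decomposes into the disjoint pieces $A-I$, $I$, $H-I$, and $B-H$, and $H$ may well be strictly larger than $I$ (that is exactly what this lemma and its neighbours are reducing away). So ruling out $a_1a_2\in I$ and $a_1a_2\in B-H$ does not by itself force $a_1a_2\in A-I$: the possibility $a_1a_2\in H-I$ (elements of $B$ outside $A$ whose inverses also lie in $B$) is never addressed, and your closing sentence ``having eliminated both cases, we conclude $a_1a_2\in A-I$'' is a non sequitur as written. The gap is easy to close, in either of two ways. The cleanest, which is what the paper does and which you never invoke, is simply that $A$ is itself a subsemigroup, so $a_1a_2\in A$ automatically; since $A\cap(B-H)=\emptyset$ and $A\cap(H-I)=\emptyset$ (both follow from $A\cap B=I\subseteq H$), the only case to exclude is $a_1a_2\in I$, and your entire first paragraph about $B-H$ becomes superfluous. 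Alternatively, notice that your argument excluding $I$ uses only that $a_1a_2\in H$ (that is all that is needed to apply Lemma~\ref{lem:AtimesHsecond}), so it excludes $H-I$ verbatim; but you must say so.

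Apart from this bookkeeping slip, your exclusion of the case $a_1a_2\in I$ is essentially the paper's proof in mirror image: the paper derives $a_1\in Ha_2^{-1}\subseteq B-H$ from $a_2^{-1}\in B-H$ (Lemma~\ref{lem:AandBareinversessecond}) and invariance of $B-H$ under multiplication by $H$ (Lemma~\ref{lem:AtimesHsecond}), contradicting $a_1\in A$; you derive $a_2=a_1^{-1}(a_1a_2)\in(B-H)H=B-H$, contradicting $a_2\in A-I$. Both computations are correct, and your use of Lemma~\ref{lem:AandBareinversessecond} as a black box (ignoring the relabelling inside its proof) is legitimate.
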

\begin{proof}
Let $a_1,a_2\in A-I$.  Assume for contradiction that $a_1a_2\in I$.  Since $I\subseteq H$, this means $a_1a_2\in H$ and therefore $a_1\in Ha_2^{-1}$.  By Lemma~\ref{lem:AandBareinversessecond}, $a_2^{-1}\in B-H$ so we see $Ha_2^{-1}\subseteq B-H$.  Since $B-H$ is the complement of $A$, this contradicts the fact that $a_1\in A$.
\end{proof}

Using Lemma~\ref{lem:A-Isemigroupsecond}, we see that $G$ is the union of two proper semigroups, $(A-I)\cup\{1\}$ and $B$, who intersect only on the identity. For the rest of the paper, we will assume without loss of generality that $I=A\cap B=\{1\}$.

As an aside, we point out that all torsion elements of $G$ must be contained in $H$, the maximal subgroup of $B$. We express a consequence of this in the following proposition.

\begin{proposition}\label{pro:torsionsemigroups}
A group that is generated by elements of finite order is not the union of two proper subsemigroups.
\end{proposition}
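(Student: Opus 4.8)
The plan is to argue by contradiction, leaning entirely on the structure theory already established for a group written as the union of two proper subsemigroups. Suppose $G$ is generated by its elements of finite order and also $G=A\cup B$ for proper subsemigroups $A,B$. By the reductions carried out above we may assume $A\cap B=\{1\}$; let $H=\{h\in B\mid h^{-1}\in B\}$ be the maximal subgroup of $B$, and recall that $A-\{1\}$ is a semigroup (Lemma~\ref{lem:A-Isemigroupsecond}) with $(A-\{1\})^{-1}=B-H$ (Lemma~\ref{lem:AandBareinversessecond}). Note also that $A-\{1\}$ and $B-H$ are disjoint, since a common element would lie in $A\cap B=\{1\}$.

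The key step is to show that every element of finite order of $G$ lies in $H$; this is just the observation recorded in the remark preceding the proposition, spelled out. Let $g\in G$ with $g^n=1$ and take $n$ minimal, so that $g,g^2,\dots,g^{n-1}$ are all nontrivial (if $n=1$ then $g=1\in H$ and there is nothing to prove, so assume $n\ge 2$). Since $A\cap B=\{1\}$ and $g\neq 1$, exactly one of the following holds: $g\in A-\{1\}$ or $g\in B$. In the first case, because $A-\{1\}$ is a semigroup and none of $g,\dots,g^{n-1}$ equals $1$, all these powers lie in $A-\{1\}$; in particular $g^{-1}=g^{n-1}\in A-\{1\}$. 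But Lemma~\ref{lem:AandBareinversessecond} forces $g^{-1}\in(A-\{1\})^{-1}=B-H$, contradicting disjointness of $A-\{1\}$ and $B-H$. Hence $g\in B$, and since $B$ is a semigroup every power of $g$ lies in $B$; in particular $g^{-1}=g^{n-1}\in B$, so $g\in H$ by definition of $H$.

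To finish, recall that $H$ is a subgroup of $G$ (being the maximal subgroup of $B$). Since $G$ is generated by its elements of finite order and every such element lies in $H$, we get $G=H\subseteq B$, contradicting that $B$ is a proper subsemigroup of $G$. I do not anticipate a genuine obstacle: all of the work is already contained in Lemmas~\ref{lem:AandBareinversessecond} and~\ref{lem:A-Isemigroupsecond} together with the standing reduction $A\cap B=\{1\}$. The only point requiring mild care is the choice of $n$ minimal, so that the powers $g,\dots,g^{n-1}$ are genuinely nontrivial and therefore remain inside the semigroup $A-\{1\}$ in the first case above.
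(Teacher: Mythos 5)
Your argument is correct and follows essentially the same route as the paper: show every torsion element lies in the maximal subgroup $H$ of $B$ (using that $A-\{1\}$ is a semigroup whose elements have inverses in $B-H$), and then conclude $G=H\subseteq B$, contradicting properness of $B$. Your extra care with the minimality of $n$ is harmless but not needed, since any power of an element of $A-\{1\}$ already lies in $A-\{1\}$, exactly as in the paper's version.
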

\begin{proof}
Let the group $G$ be the union of proper subsemigroups, $A$ and $B$, with the same assumptions on $A$ and $B$ as above.  If $g\in G$ has order $n$, then $g^{n-1}=g^{-1}$.  This implies $g,g^{-1}\not\in A-\{1\}$, since $A-\{1\}$ is closed under multiplication by Lemma~\ref{lem:A-Isemigroupsecond}, but does not contain the inverses of any of its elements by Lemma~\ref{lem:AandBareinversessecond}.  We see $g,g^{-1}\in B$ and thus $g\in H$.  We conclude that if $G$ were generated by elements of finite order, then $H$ contains a generating set of $G$ so $G=H$, which is a contradiction.
\end{proof}

With the assumptions on the subsemigroups $A$ and $B$, we can now construct left-orderable quotients of $G$.

If $H\unlhd G$, then Lemma~\ref{lem:AandBareinversessecond} implies that $B/H\cap(B/H)^{-1}= \{H\}$ and $ B/H \cup(B/H)^{-1}= G/H$. We then see $G/H$ is left-orderable using Proposition~\ref{pro:loequivalentdefn}, where the order is defined as $xH\leq yH$ if and only if $x^{-1}y\in B$

If $H\ntrianglelefteq G$, we construct new subsemigroups $A'$ and $B'$ whose union is $G$ that will be used to construct a left-order. Fix a $g\in G$ such that $H^g\neq H$.  Define $H_A=H\cap A^{g^{-1}}$ and $H_B=H\cap B^{g^{-1}}$.  Since $H^g\neq H$ and $H$ is the maximal subgroup of $B$, $H_A$ contains a non-identity element.  Also, $H_A$ is a semigroup since it is the intersection of two semigroups.  Similarly, $H_B$ is a semigroup.  Note that $H=H_A\cup H_B$ and $H_A\cap H_B=\{1\}$.

Define $A'=A\cup H_A$ and $B'=(B-H_A)\cup\{1\}$.

\begin{lemma}\label{lem:AprimeBprime}
$A'$ and $B'$ are semigroups.
\end{lemma}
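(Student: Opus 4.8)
The plan is to prove closure of $A'$ and $B'$ by a short case analysis on which "piece" of $G$ each of the two factors comes from. First I would record the underlying partition. Since $H=\{h\in B\mid h^{-1}\in B\}\subseteq B$ and $A\cap B=\{1\}$, we get $A\cap H=\{1\}$; combined with $H=H_A\cup H_B$ and $H_A\cap H_B=\{1\}$, this makes $G$ the disjoint union of $\{1\}$, $A\setminus\{1\}$, $B-H$, $H_A\setminus\{1\}$, and $H_B\setminus\{1\}$. In these terms $A'=A\cup H_A$ (with $A\cap H_A=\{1\}$) and $B'=(B-H_A)\cup\{1\}=(B-H)\cup H_B$. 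I will also use freely that $A$, $H_A$, $H_B$ are semigroups, that $H$ is a subgroup, and Lemmas~\ref{lem:AtimesHsecond}, \ref{lem:AandBareinversessecond}, \ref{lem:A-Isemigroupsecond}, with $I=\{1\}$ so that $A-I=A\setminus\{1\}$.

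For $A'$, take $x,y\in A'$, so each of $x,y$ lies in $A$ or in $H_A$. If both lie in $A$, then $xy\in A\subseteq A'$ since $A$ is a semigroup; if both lie in $H_A$, then $xy\in H_A\subseteq A'$ since $H_A$ is a semigroup. This leaves the two mixed cases, which I handle with Lemma~\ref{lem:AtimesHsecond}. If $x\in A$ and $y\in H_A\subseteq H$: when $x\neq1$ we have $x\in A\setminus\{1\}=A-I$, so $xy\in(A-I)y=A-I\subseteq A'$, while if $x=1$ then $xy=y\in H_A\subseteq A'$. The case $x\in H_A$, $y\in A$ is symmetric, using $x(A-I)=A-I$. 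Hence $A'$ is a semigroup.

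For $B'$, take $x,y\in B'\subseteq B$, so $xy\in B$ automatically; since $B'=(B-H_A)\cup\{1\}$, it suffices to show $xy\notin H_A\setminus\{1\}$. Suppose to the contrary that $xy\in H_A\setminus\{1\}\subseteq H$, and split into cases according to $x,y\in(B-H)\cup H_B$. If $x,y\in H_B$, then $xy\in H_B$, so $xy\in H_A\cap H_B=\{1\}$, a contradiction. If $x,y\in B-H$, then by Lemma~\ref{lem:AandBareinversessecond} we may write $x=a_1^{-1}$, $y=a_2^{-1}$ with $a_1,a_2\in A\setminus\{1\}$; by Lemma~\ref{lem:A-Isemigroupsecond} $a_2a_1\in A\setminus\{1\}$, so $xy=(a_2a_1)^{-1}\in(A\setminus\{1\})^{-1}=B-H$, contradicting $xy\in H$. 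In the mixed case $x\in B-H$, $y\in H_B$: since $y\in H_B\subseteq H$ we have $y^{-1}\in H$, whence $x=(xy)y^{-1}\in H$ (both factors in the subgroup $H$), contradicting $x\in B-H$; the case $x\in H_B$, $y\in B-H$ is symmetric via $y=x^{-1}(xy)$. Thus $xy\notin H_A\setminus\{1\}$, so $xy\in B'$, and $B'$ is a semigroup.

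I do not expect a genuine obstacle here — the computations are all one line. The only thing to be careful about is the bookkeeping: keeping track of which earlier fact is doing the work in each subcase (coset-invariance from Lemma~\ref{lem:AtimesHsecond} for $A'$; the inversion identity $(A-I)^{-1}=B-H$ together with $A-I$ being a semigroup, and $H$ being an honest subgroup, for $B'$), and treating the identity as the degenerate subcase in the mixed products.
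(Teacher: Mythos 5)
Your proof is correct and follows essentially the same route as the paper: closure of $A'$ via Lemma~\ref{lem:AtimesHsecond} applied to the mixed products, and closure of $B'$ by noting $xy\in B$ and ruling out $xy\in H_A\setminus\{1\}$ by contradiction. The only difference is in how the subcases for $B'$ are closed: where the paper uniformly multiplies by the inverse of the factor lying in $B-H$ and applies Lemmas~\ref{lem:AandBareinversessecond} and \ref{lem:AtimesHsecond}, you instead use Lemma~\ref{lem:A-Isemigroupsecond} when both factors lie in $B-H$ and the subgroup property of $H$ in the mixed case, which works equally well.
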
  
\begin{proof}
Since $A$ and $H_A$ are semigroups, Lemma~\ref{lem:AtimesHsecond} implies that $A'$ is a semigroup.

Let $b_1,b_2\in B'$.  Firstly, note that if $b_1,b_2\not\in B-H$, then $b_1,b_2\in H_B$ which means $b_1b_2\in H_B$.  Therefore $b_1b_2\in B'$.

Now suppose for contradiction that $b_1b_2=h\in H_A$, implying that either $b_1$ or $b_2$ must be contained in $B-H$.  If $b_1\in B-H$, then $b_1^{-1}\in A-\{1\}$.  This implies $b_2=b_1^{-1}h\in A-\{1\}$ by Lemma~\ref{lem:AtimesHsecond}, which is a contradiction.  We get a similar contradiction if $b_2\in B-H$. We can conclude that $b_1b_2\in B'$ and $B'$ is a semigroup.
\end{proof}

We have constructed a new pair of semigroups $A'$ and $B'$ such that $G=A'\cup B'$ and $A'\cap B'=\{1\}$.  It is also important to note that that $A\subsetneq A'$ and $B'\subsetneq B$.  To further the comparison between $A$ and $A'$ and $B$ and $B'$, the following lemma parallels Lemma~\ref{lem:AandBareinversessecond}.

\begin{lemma}\label{lem:Aprimeinverse}
If $a\in A'$, then $a^{-1}\in B'$.
\end{lemma}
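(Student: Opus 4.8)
The plan is to prove Lemma~\ref{lem:Aprimeinverse} by decomposing $A'=A\cup H_A$ into the three pieces $\{1\}$, $A-\{1\}$, and $H_A-\{1\}$ (note $A\cap H_A=\{1\}$ since $H_A\subseteq H\subseteq B$ and $A\cap B=\{1\}$), and checking in each case that the inverse lands in $B'=(B-H_A)\cup\{1\}$. The case $a=1$ is immediate since $1\in B'$. For $a\in A-\{1\}$, I would apply Lemma~\ref{lem:AandBareinversessecond}, which gives $a^{-1}\in B-H$; since $H_A\subseteq H$, we have $B-H\subseteq B-H_A\subseteq B'$, so $a^{-1}\in B'$. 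This case is routine.

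The substantive case is $a\in H_A-\{1\}$. Here $a\in H$, and since $H$ is a group, $a^{-1}\in H$; it remains to rule out $a^{-1}\in H_A$. Suppose for contradiction that $a^{-1}\in H_A$. Then both $a$ and $a^{-1}$ lie in $A^{g^{-1}}$, equivalently $gag^{-1}$ and $ga^{-1}g^{-1}=(gag^{-1})^{-1}$ both lie in $A$. Since $A$ is a semigroup, the cyclic group generated by $gag^{-1}$ is then a subgroup of $A$, which must be trivial by the remark following Lemma~\ref{lem:AandBareinversessecond} (recall we are in the reduced setting $A\cap B=\{1\}$). Hence $gag^{-1}=1$, so $a=1$, contradicting $a\neq 1$. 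Therefore $a^{-1}\in H-H_A$, and in particular $a^{-1}\in B-H_A\subseteq B'$ (one may note further that $a^{-1}\in H_B$ using $H=H_A\cup H_B$, though this is not needed).

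The main obstacle is exactly this last case: the key observation is that having both $a$ and $a^{-1}$ inside the conjugated semigroup $A^{g^{-1}}$ yields, after conjugating back by $g$, a genuine nontrivial subgroup of $A$, which the reduction step has already excluded. The remaining two cases are straightforward bookkeeping with the inclusions among $H$, $H_A$, $B$, and $B'$ together with Lemma~\ref{lem:AandBareinversessecond}.
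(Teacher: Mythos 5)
Your proof is correct and follows essentially the same route as the paper: the case $a\in A-\{1\}$ is handled via Lemma~\ref{lem:AandBareinversessecond}, and for $a\in H_A-\{1\}$ you conjugate back by $g$ to get an element of $A$ together with its inverse, exactly as in the paper's argument. The only cosmetic difference is that you derive the contradiction from the remark that $A$ contains no nontrivial subgroup, whereas the paper cites Lemma~\ref{lem:AandBareinversessecond} directly; these are equivalent in the reduced setting $A\cap B=\{1\}$.
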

\begin{proof}
If $a\in A-\{1\}$, then $a^{-1}\in B-H\subseteq B'$.

Now let $a\in H_A$ with $a\neq 1$.  Then $a^{-1}\in H$ since $H$ is a group. Suppose for contradiction that $a^{-1}\in H_A$.  Then $a^{-1}\in A^{g^{-1}}$, meaning $(a^{-1})^g\in A$ and therefore $(a^g)^{-1}\in A$.   However, by definition $a\in H_A$ implies $a^g\in A$. Having both $a^g,(a^g)^{-1}\in A$ contradicts Lemma~\ref{lem:AandBareinversessecond} so $a^{-1}\not\in H_A$. 
\end{proof}

Like $A$, the semigroup $A'$ does not contain a nontrivial subgroup.

We now consider the space $\mathcal{F}$ of pairs of proper subsemigroups of $G$, $(U,V)$, such that
\begin{enumerate}
\item $G=U\cup V$ and $U\cap V=\{1\}$;
\item $A\subseteq U$ and $V\subseteq B$;
\item $U$ does not contain a nontrivial subgroup.
\end{enumerate}

Define a partial order on this space as $(U_1,V_1)\leq (U_2,V_2)$ if and only if $V_1\subseteq V_2$. Note that $V_1\subseteq V_2$ if and only if $U_2\subseteq U_1$.

\begin{lemma}
$\mathcal{F}$ has a minimal element
\end{lemma}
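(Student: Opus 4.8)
The plan is to apply Zorn's Lemma to the partially ordered set $\mathcal{F}$, ordered by reverse inclusion of the first coordinate (equivalently, forward inclusion of the second). Note first that $\mathcal{F}$ is nonempty: the pair $(A,B)$ itself lies in $\mathcal{F}$, since $G=A\cup B$, $A\cap B=\{1\}$ by our standing assumption, $A\subseteq A$ and $B\subseteq B$ trivially, and $A$ contains no nontrivial subgroup by the remark following Lemma~\ref{lem:AandBareinversessecond}. Since we want a \emph{minimal} element under $\leq$, and $(U_1,V_1)\leq(U_2,V_2)$ means $V_1\subseteq V_2$, a minimal element corresponds to a \emph{maximal} chain in the direction of shrinking $V$ (growing $U$); so I would verify that every chain in $\mathcal{F}$ has a lower bound, then conclude by Zorn's Lemma.

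Concretely, let $\{(U_\alpha,V_\alpha)\}_{\alpha\in\Lambda}$ be a chain in $\mathcal{F}$. The natural candidate for a lower bound is $(U,V)$ where $U=\bigcup_\alpha U_\alpha$ and $V=\bigcap_\alpha V_\alpha$. I would check each defining property in turn. First, $U$ is a semigroup: given $u,u'\in U$, by the chain condition both lie in a common $U_\alpha$, which is closed, so $uu'\in U_\alpha\subseteq U$; similarly $V$ is a semigroup as an intersection of semigroups, and $1\in U\cap V$. Second, $U\cup V=G$: if $g\notin U$, then $g\notin U_\alpha$ for every $\alpha$, so $g\in V_\alpha$ for every $\alpha$ (since each pair covers $G$ and $U_\alpha\cap V_\alpha=\{1\}$, and $g\neq 1$ because $1\in U$), hence $g\in V$. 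And $U\cap V=\{1\}$: if $g\in U\cap V$ with $g\neq 1$, pick $\alpha$ with $g\in U_\alpha$; then $g\in V\subseteq V_\alpha$, contradicting $U_\alpha\cap V_\alpha=\{1\}$. Third, $A\subseteq U$ and $V\subseteq B$ follow immediately from $A\subseteq U_\alpha$ and $V\subseteq V_\alpha\subseteq B$ for each $\alpha$. Finally, $U$ and $V$ are proper: $V\neq G$ since, e.g., it misses the inverse of any nonidentity element of $A$ (which lies in $B-H$ by Lemma~\ref{lem:AandBareinversessecond}, and such an element's inverse lies in $A-\{1\}\subseteq U$, so cannot be in $V$); more directly, $U\neq\{1\}$ gives $V$ proper and $V\supseteq B-H\neq\emptyset$ gives $U$ proper.

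The one genuinely delicate point — the step I expect to be the main obstacle — is verifying condition (3), that $U=\bigcup_\alpha U_\alpha$ contains no nontrivial subgroup. A nested union of subgroup-free semigroups need not obviously remain subgroup-free, since a finite subgroup could in principle straddle infinitely many terms of the chain. Here I would use the structural information available: if $K\leq U$ is a nontrivial subgroup, take $1\neq k\in K$, so $k^{-1}\in K\subseteq U$ as well, and both $k,k^{-1}\in U\subseteq B$, whence $k\in H$. But then I want to derive a contradiction from the fact that each $U_\alpha$ is subgroup-free together with the relation between $U_\alpha$ and $B$. Since $k,k^{-1}\in U=\bigcup U_\alpha$ and the $U_\alpha$ form a chain, there is a single $U_\alpha$ containing both $k$ and $k^{-1}$; this exhibits $\{1,k,k^{-1},\dots\}$, and more to the point shows $k$ lies in a subgroup (namely $\langle k\rangle$, which is generated by the single element $k$ and its inverse, both in $U_\alpha$, so $\langle k\rangle\subseteq U_\alpha$) inside $U_\alpha$, contradicting property (3) for $(U_\alpha,V_\alpha)$. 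So in fact the chain condition reduces the subgroup-freeness of the union to that of a single term, and the lower bound lies in $\mathcal{F}$. Zorn's Lemma then yields a minimal element.
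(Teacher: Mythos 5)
Your proof is correct and takes essentially the same route as the paper's: given a chain, form the lower bound $(\bigcup_\alpha U_\alpha,\ \bigcap_\alpha V_\alpha)$, verify the three defining conditions (using the chain condition to put an element and its inverse in a single $U_\alpha$, so subgroup-freeness of the union reduces to that of one term), and apply Zorn's Lemma. Your extra checks of nonemptiness via $(A,B)$ and of properness are fine points the paper leaves implicit.
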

\begin{proof}
Let $\{(U_i, V_i)\}_i$ be a chain in $\mathcal{F}$.   We claim that $(\bigcup U_i, \bigcap V_i)\in\mathcal{F}$.  Clearly $\bigcap V_i$ is a proper subsemigroup of $G$, as the intersection of semigroups is a semigroup.  Let $x,y\in \bigcup U_i$. There then exists an $n$ such that $x,y\in U_n$ and therefore $xy\in U_n$.  We see that $\bigcup U_i$ is also a semigroup. (We show it is proper later.)

For condition 1, clearly $(\bigcup U_i)\cup (\bigcap V_i)\subseteq G$.  For the reverse containment, let $g\in G$.  If there exists an $n$ such that $g\in U_n$, then $g\in \bigcup U_n$.  If there is no $n$ such that $g\in U_n$, then $g\in V_i$ for all $i$.  Therefore $g\in (\bigcap V_i)$.  In either case, $g\in (\bigcup U_i)\cup (\bigcap V_i)$.  Lastly, if $h\in (\bigcap V_i)$, then $h\not\in U_i$ for each $i$ unless $h=1$, implying $(\bigcup U_i)\cap (\bigcap V_i)=\{1\}.$

Condition 2 is straightforward using the fact that $A\subseteq U_i$ and $V_i\subseteq B$ for all $i$.  

Lastly, suppose $\bigcup U_i$ contains a nontrivial subgroup.  This would imply that there exists a $g\in \bigcup U_i$ such that $g^{-1}\in\bigcup U_i$.  Therefore there exists an $n$ such that $g,g^{-1}\in U_n$, contradicting the fact that $U_n$ has no nontrivial subgroups.  This also implies $\bigcup U_i$ is proper in $G$.

By Zorn's Lemma, $\mathcal{F}$ has a minimal element.
\end{proof}

This minimal element will give us a partial order on $G$.

\begin{lemma}\label{lem:MinimalPairOfSemigroups}
Let $(U,V)$ be a minimal element of $\mathcal{F}$.  Then the subgroup \linebreak $N=\{h\in V \mid h^{-1}\in V\}$ is normal in $G$. 
\end{lemma}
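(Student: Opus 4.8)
The plan is to show that $N$ is invariant under conjugation by every $g \in G$, i.e.\ $gNg^{-1} = N$ for all $g$. Since $N$ is precisely the maximal subgroup of $V$ (the set of elements of $V$ whose inverse also lies in $V$), it suffices to show that $gNg^{-1}$ is again a subgroup contained in $V$; maximality of $N$ in $V$ then forces $gNg^{-1} \subseteq N$, and applying the same reasoning to $g^{-1}$ gives equality. So the heart of the matter is: for every $g \in G$ and every $h \in N$, we have $ghg^{-1} \in V$.

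The strategy is a minimality argument analogous to the construction of $A'$ and $B'$ earlier in the section. Suppose for contradiction that some conjugate $H' := gNg^{-1}$ is not contained in $V$ (equivalently $N^g \neq N$ when viewed in the right-action notation). Then I would mimic the passage from $(A,B)$ to $(A',B')$ in the excerpt: set $N_U = N \cap U^{g^{-1}}$ and $N_V = N \cap V^{g^{-1}}$, observe both are subsemigroups of the group $N$ with $N = N_U \cup N_V$ and $N_U \cap N_V = \{1\}$, and crucially that $N_U$ contains a nontrivial element (because $N^g \neq N$ and $N$ is the maximal subgroup inside $V$, so the part of $N$ pushed outside $V$ by conjugation sits in $U$). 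Then define $U'' = U \cup N_U$ and $V'' = (V - N_U) \cup \{1\}$. The same computations used in Lemma~\ref{lem:AprimeBprime} and Lemma~\ref{lem:Aprimeinverse}—with $N$, $U$, $V$ playing the roles of $H$, $A$, $B$, and using Lemma~\ref{lem:AtimesHsecond}-type coset arguments applied to $V$ (whose complement $U - \{1\}$ is a union of cosets of $N$)—show that $(U'', V'')$ again lies in $\mathcal{F}$: it is a pair of proper subsemigroups covering $G$ meeting only at $1$, with $A \subseteq U \subseteq U''$ and $V'' \subseteq V \subseteq B$, and $U''$ contains no nontrivial subgroup. But $V'' \subsetneq V$, contradicting minimality of $(U,V)$.

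Before that argument goes through I need the coset structure for the pair $(U,V)$ that parallels Lemma~\ref{lem:AtimesHsecond} and Lemma~\ref{lem:AandBareinversessecond} for $(A,B)$: namely that $V$ is a union of left and right cosets of $N$, that $U - \{1\}$ is their complement, and that $(U-\{1\})^{-1} = V - N$. These follow for $(U,V)$ by exactly the same reasoning as for $(A,B)$ once one knows $N$ is the maximal subgroup of $V$—which is true by definition of $N$—and that $U$ contains no nontrivial subgroup, which is condition 3 of $\mathcal{F}$. I would record these as a preliminary observation (or simply cite the earlier lemmas, noting their proofs only used that $V \subseteq B$ has maximal subgroup $N$ and $U$ is subgroup-free).

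The main obstacle I anticipate is verifying that $V''$ is a semigroup and that $U''$ remains subgroup-free after adjoining $N_U$—precisely the content of Lemmas~\ref{lem:AprimeBprime} and \ref{lem:Aprimeinverse}—and checking that the element of $N_U$ forced to be nontrivial is genuinely there, i.e.\ that $N^g \neq N$ really does push some element of $N$ out of $V$ and into $U$. This last point is where the maximality of $N$ as a subgroup of $V$ is essential: any element $n \in N$ with $gng^{-1} \notin N$ has $gng^{-1} \notin V$ (else $gng^{-1}$ together with its inverse $gn^{-1}g^{-1}$—also a conjugate, also outside $N$ or inside, but in any case if both were in $V$ it would contradict... ) — here I would argue carefully that $gNg^{-1}$ is a subgroup, so if $gNg^{-1} \subseteq V$ then $gNg^{-1} \subseteq N$ by maximality, and then equality follows by symmetry; hence $gNg^{-1} \not\subseteq V$ means some $gng^{-1} \in U - \{1\}$, so $n \in N \cap U^{g^{-1}} = N_U$ is nontrivial. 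Once these verifications are in place, the contradiction with minimality closes the proof.
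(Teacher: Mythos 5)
Your proof is correct and follows essentially the same route as the paper: assuming $N$ is not normal, you repeat the construction of $(A',B')$ with $(U,V,N)$ in place of $(A,B,H)$ to obtain a pair in $\mathcal{F}$ strictly below $(U,V)$, contradicting minimality. The details you supply---the analogues of Lemmas~\ref{lem:AtimesHsecond} and~\ref{lem:AandBareinversessecond} for $(U,V)$ and the verification that $N\cap U^{g^{-1}}$ contains a nontrivial element---are precisely what the paper's proof leaves implicit in the word ``analogously.''
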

\begin{proof}
Assume for contradiction that $N\ntrianglelefteq G$.  Then there exists an element $g\in G$ such that $N^g\neq N$.  Using $g$, define $U'$ and $V'$ analogously to $A'$ and $B'$ before Lemma~\ref{lem:AprimeBprime}.  Then $(U',V')\in\mathcal{F}$ and $(U',V')\lneq(U,V)$, which is a contradiction.
\end{proof}

Since $N\unlhd G$, we see that $G/N$ is left-orderable, where the order is defined as $xN\leq yN$ if and only if $x^{-1}y\in B/N$.

We now give a proof of Theorem~\ref{thm:MainThm}. 

\begin{proof}[Proof of Theorem~\ref{thm:MainThm}]
Let $G$ be a group with proper subsemigroups $A$ and $B$ such that $G=A\cup B$.  We may assume without loss of generality that $A\cap B=\{1\}$ and $A^{-1}\subseteq B$, using Lemmas~\ref{lem:intersectioninB}, \ref{lem:AtimesHsecond}, \ref{lem:AandBareinversessecond}, and \ref{lem:A-Isemigroupsecond}.  Let $H$ be the maximal subgroup of $B$ with respect to inclusion.  If $H\unlhd G$, then $G/H$ is left-orderable where  $xH\leq yH$ if and only if $x^{-1}y\in B$.  If $H\ntrianglelefteq G$, let $\mathcal{F}$ be the partially ordered family of pair of subsemigroups defined above.  Further let $(U,V)$ be the minimal pair of subsemigroups with $N$ being the maximal subgroup of $V$.  Since $N\unlhd G$ by Lemma~\ref{lem:MinimalPairOfSemigroups}, we see that $G/N$ is left-orderable, where the order is defined as $xN\leq yN$ if and only if $x^{-1}y\in V$.
\end{proof} 

We can also state a corollary of Proposition~\ref{pro:torsionsemigroups}.

\begin{corollary}
A group generated by elements of finite order has no nontrivial left-orderable quotients.
\end{corollary}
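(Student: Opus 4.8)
The plan is to read the corollary straight off Theorem~\ref{thm:MainThm} and Proposition~\ref{pro:torsionsemigroups} by contraposition. I would suppose that $G$ is generated by elements of finite order and assume, for contradiction, that there is a normal subgroup $N\unlhd G$ with $G/N$ nontrivial and left-orderable. By the reverse implication of Theorem~\ref{thm:MainThm} (equivalently, the first Proposition of Section~2), $G$ is then the union of two proper subsemigroups. This contradicts Proposition~\ref{pro:torsionsemigroups}, which says precisely that a group generated by elements of finite order is not such a union. Hence no such $N$ exists, so $G$ has no nontrivial left-orderable quotient.

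If one prefers an argument independent of Theorem~\ref{thm:MainThm}, I would instead use the elementary fact that every left-orderable group is torsion-free: by Proposition~\ref{pro:loequivalentdefn} a left-order yields a positive cone $P$ with $P\cap P^{-1}=\{1\}$, and if $g\neq 1$ had finite order $n$ then $g\in P$ would force $g^{-1}=g^{n-1}\in P$ (and symmetrically if instead $g^{-1}\in P$), contradicting $P\cap P^{-1}=\{1\}$. Then, writing $\pi\colon G\to G/N$ for the quotient map, the image $\pi(g)$ of a finite-order element $g$ has order dividing that of $g$, so $G/N$ is again generated by elements of finite order; if $G/N$ is left-orderable it is torsion-free, hence generated by the identity alone and thus trivial.

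I do not expect a genuine obstacle here; the substance is already carried by Proposition~\ref{pro:torsionsemigroups} (respectively, by the torsion-freeness of left-orderable groups). The only step worth a sentence is that passing to a quotient preserves being generated by torsion elements, which is immediate because the order of $\pi(g)$ divides the order of $g$.
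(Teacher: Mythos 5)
Your first argument is exactly the paper's intended derivation: the corollary is stated as an immediate consequence of Proposition~\ref{pro:torsionsemigroups} together with the reverse implication of Theorem~\ref{thm:MainThm}, precisely as you argue by contraposition. Your second, independent argument (left-orderable groups are torsion-free by Proposition~\ref{pro:loequivalentdefn}, and generation by torsion elements passes to quotients) is also correct and gives a more elementary route that bypasses the semigroup machinery entirely.
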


\section{Minimal Normal Subgroups and Coverings}
In this section, we include some brief remarks on minimal normal subgroups inducing left-orderable quotients and coverings of groups by two proper subgroups.

\begin{proposition}\label{pro:IntersectingOrders}
Let $N_1,N_2\unlhd G$ such that $G/N_1$ and $G/N_2$ are left-orderable.  Then $G/(N_1\cap N_2)$ is left-orderable. Furthermore, both $N_1/(N_1\cap N_2)$ and $N_2/(N_1\cap N_2)$ are also left-orderable.
\end{proposition}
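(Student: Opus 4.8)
The plan is to realise $G/(N_1\cap N_2)$ as a subgroup of the direct product $G/N_1\times G/N_2$ and to left-order the latter lexicographically. Write $M=N_1\cap N_2$, and let $\phi_i\colon G/M\to G/N_i$ denote the natural surjection $gM\mapsto gN_i$. First I would observe that the map $\Phi\colon G/M\to G/N_1\times G/N_2$ given by $\Phi(gM)=(gN_1,gN_2)$ is a well-defined group homomorphism with trivial kernel: if $gN_1=N_1$ and $gN_2=N_2$, then $g\in N_1\cap N_2=M$, so $gM=M$. Hence $G/M$ is isomorphic to the subgroup $\Phi(G/M)$ of $G/N_1\times G/N_2$.

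Next I would left-order $G/N_1\times G/N_2$. Fixing left-orders $\leq_1$ and $\leq_2$ on $G/N_1$ and $G/N_2$ (which exist by hypothesis), define the lexicographic order on the product by declaring $(x_1,x_2)\leq(y_1,y_2)$ if and only if either $x_1<_1 y_1$, or else $x_1=y_1$ and $x_2\leq_2 y_2$. This is a total order, and it respects left multiplication because each $\leq_i$ does; the verification is routine. Since a left-order restricts to a left-order on any subgroup, $\Phi(G/M)$, and therefore $G/M=G/(N_1\cap N_2)$, is left-orderable. (If one prefers an argument phrased entirely in the language of Proposition~\ref{pro:loequivalentdefn}, one may instead take positive cones $P_i\subseteq G/N_i$, pull them back to subsemigroups $\overline{Q}_i=\phi_i^{-1}(P_i)$ of $G/M$, and check directly that $Q=(\overline{Q}_1\setminus N_1/M)\cup(\overline{Q}_2\cap N_1/M)$ is a subsemigroup of $G/M$ satisfying $Q\cup Q^{-1}=G/M$ and $Q\cap Q^{-1}=\{M\}$.)

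For the final assertion I would apply the second isomorphism theorem: $N_1/(N_1\cap N_2)\cong N_1N_2/N_2$, and $N_1N_2/N_2$ is a subgroup of $G/N_2$, which is left-orderable; hence $N_1/(N_1\cap N_2)$ is left-orderable. By symmetry, $N_2/(N_1\cap N_2)\cong N_1N_2/N_1$ is a subgroup of $G/N_1$ and is thus left-orderable as well. (Equivalently, $N_1/M$ and $N_2/M$ are subgroups of the left-orderable group $G/M$ produced above, so they inherit a left-order.)

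The argument has essentially no difficult step: the only points requiring any care are checking that $\Phi$ is well defined with trivial kernel and that the lexicographic order is left-invariant and total, both of which are immediate from the definitions. If a fully self-contained, positive-cone proof is desired instead, the one mild obstacle is verifying that the cone $Q$ described above is closed under multiplication, which reduces to a short case analysis according to whether the two factors lie in $N_1/M$.
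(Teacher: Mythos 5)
Your proof is correct and is essentially the paper's argument: the paper defines the same lexicographic order directly on the cosets of $N_1\cap N_2$ (your embedding into $G/N_1\times G/N_2$ just makes its well-definedness and antisymmetry explicit), and it handles the final claim exactly as in your parenthetical remark, by viewing $N_1/(N_1\cap N_2)$ and $N_2/(N_1\cap N_2)$ as subgroups of the left-orderable group $G/(N_1\cap N_2)$.
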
 
\begin{proof}
Let $\leq_1$ be the order on $G/N_1$ and $\leq_2$ be the order on $G/N_2$.  Define a partial order $\leq$ on  $G/(N_1\cap N_2)$ as $a(N_1\cap N_2)\leq b(N_1\cap N_2)$ if and only if $a N_1< b N_1$ or $aN_1=bN_1$ and $aN_2\leq b N_2$.  It is clear that this is a left order on $G/(N_1\cap N_2)$ as both $\leq_1$ and $\leq_2$ are left orders.

Every subgroup of a left-orderable group is left-orderable, simply by restricting the order to the subgroup.  Therefore $N_1/(N_1\cap N_2)$ and $N_2/(N_1\cap N_2)$ are also left-orderable as they are subgroups of $G/(N_1\cap N_2)$.
\end{proof}

Proposition~\ref{pro:IntersectingOrders} indicates the presence of a minimal normal subgroup inducing a left-orderable quotient, which is simply the intersection of all normal subgroups inducing left-orderable quotients.  Also, given two covering of $G$ by two proper subsemigroups, we may construct a `new' covering from the order given in Proposition~\ref{pro:IntersectingOrders}. 

Let $G=A_1\cup B_1=A_2\cup B_2$ where $A_1,A_2, B_1,B_2$ are proper subsemigroups of $G$ with the usual assumptions that $A_1\cap B_1=\{1\}$ and $A_2\cap B_2=\{1\}$, $A_1$ and $A_2$ contain no nontrivial subgroups, and the maximal subgroups of $B_1$ and $B_2$ are normal in $G$.  (Essentially, we pass to a minimal element of the partially ordered pairs given by Lemma~\ref{lem:MinimalPairOfSemigroups}.) Let $N_1$ and $N_2$ be the maximal subgroups of $B_1$ and $B_2$ respectively, with the left orders on the quotients being $x\leq_i y$ if and only if $x^{-1}y\in B_i/N_i$.  

Define $B'=\{g\in G \mid N_1\cap N_2\leq g(N_1\cap N_2)\}$ where $\leq$ is the partial order given in the proof of Proposition~\ref{pro:IntersectingOrders}.  Notice that $B'$ is the preimage of the non-negative elements of $G/(N_1\cap N_2)$. We see that $B'$ is the union of the preimage of strictly positive elements with respect to $\leq_1$ (i.e.~$B_1-N_1$) and the elements of $N_1$ that are preimages of non-negative elements with respect to $\leq_2$ (i.e.~$N_1\cap B_2$).  This implies $B'=(B_1-N_1)\cup (N_1\cap B_2)$ and therefore $B'\subseteq B_1$. 

We may also define $A'=\{g\in G \mid g(N_1\cap N_2)< N_1\cap N_2\}\cup \{1\}$ and we see that $A'$ and $B'$ are proper subsemigroups of $G$ such that $G=A' \cup B'$, $A'\cap B'=\{1\}$, $A'$ contains no nontrivial subgroups, and the maximal subgroup of B', $N_1\cap N_2$, is normal in $G$.  Furthermore, $A_1\subseteq A'$ and $B'\subseteq B_1$.  

\section{Open Questions}
Recall the covering number of a group $G$ with respect to semigroups, $\sigma_s(G)$, is the minimum number of proper subsemigroups of $G$ whose union is $G$.  Theorem~\ref{thm:MainThm} can then be restated as $\sigma_s(G)=2$ if and only if $G$ has a nontrivial left-orderable quotient.

A simple argument shows that subsemigroups of torsion groups are in fact subgroups, since the inverse of an element $g$ with finite order is a positive power of $g$.  Therefore, for a torsion group $G$, $\sigma_s(G)=\sigma_g(G)$.  Presently, the author knows of no examples of groups for which the covering number with respect to semigroups is not two nor the covering number with respect to groups.\\
\\
\noindent\textbf{Question 1} Is it true that for every group $G$, either $\sigma_s(G)=2$ or $\sigma_s(G)=\sigma_g(G)$?\\

For instance, one could look for a group $G$ such that $\sigma_s(G)$ is 7 or 11, as 7 and 11 are not equal to $\sigma_g(G)$ for any $G$ \cite{GaronziKappeSwartz18}.

On the other hand, given that $n$ is a covering number of a group $G$ with respect to subsemigroups, we may attempt to give a characterization of groups with covering number $n$  (as we have for two).\\
\\
\noindent\textbf{Question 2} For valid $n>2$, characterize the groups $G$ such that $\sigma_s(G)=n$.

\section*{Acknowledgements}
I would like to sincerely thank Marcin Mazur and Matt Brin for their insight and help with these results.
\bibliographystyle{amsplain}
\bibliography{references_mastercopy}
\end{document}